\newcommand{\C}{{\mathbb C}}
\newcommand{\N}{{\mathbb N}}
\newcommand{\R}{{\mathbb R}}
\newtheorem{lemma}{Lemma}[section]
\newtheorem{proposition}{Proposition}[section]
\newtheorem{theorem}{Theorem}[section]
\newtheorem{remark}{Remark}[section]
\let\Section=\section
\def\section{\setcounter{equation}{0}\Section}
\begin{document}

\title{Multiplicity of normalized solutions for a class of nonlinear Schr\"odinger-Poisson-Slater equations
\thanks{Corresponding author: tingjian.luo@univ-fcomte.fr}}
 \author{Tingjian Luo \\
\\Laboratoire de Math\'ematiques\\UMR CNRS 6623\\Universit\'{e} de Franche-Comt\'{e}\\16 Route de Gray\\25030 Besan\c{c}on Cedex, France}
\date{}
\maketitle

\noindent {\bf Abstract:}  In this paper, we prove a multiplicity result of solutions for the following stationary Schr\"odinger-Poisson-Slater equations
\begin{equation}\label{eq-abstract}
-\Delta u - \lambda u + (\left | x \right |^{-1}\ast \left | u \right |^2) u - |u|^{p-2}u = 0 \ \mbox{ in } \  \mathbb{R}^{3},
\end{equation}
where $\lambda\in \R$ is a parameter, and $p\in (2,6)$. The solutions we obtained have a prescribed $L^2$-norm. Our proofs are mainly inspired by a recent work of Bartsch and De Valeriola \cite{BV}. \\

\noindent {\it Keywords}: Multiplicity, Normalized solutions, Variational methods, Schr\"odinger-Poisson-Slater equations. \\
\noindent {\it 2000 Mathematical Subject Classification}: 35J50, 35Q41, 35Q55, 37K45.


\section{Introduction}
 We start from the following time-dependent nonlinear Schr\"odinger-Poisson-Slater equations
\begin{equation}\label{eq1}
 i\partial_t \varphi + \Delta \varphi  - (\left | x \right |^{-1}\ast \left | \varphi \right |^2) \varphi + |\varphi|^{p-2}\varphi=0 \  \mbox{ in } \ \mathbb{R} \times \mathbb{R}^{3},
\end{equation}
where $p\in (2,6)$, the unknown $\varphi= \varphi(t,x):\ \R^+\times \R^3 \to \C$ is a complex valued function. This class of Schr\"odinger type equations with a repulsive nonlocal Coulombic potential is obtained by approximation of the  Hartree-Fock equation which has been used to describe a quantum mechanical system of many particles, see for instance \cite{BA, L2, LS, MA}.

Over the past few decades, the equation \eqref{eq1} has been extensively studied. In particular, considerable attention is paid on the study of standing waves for \eqref{eq1}. By standing waves, we mean solutions of \eqref{eq1} with the form
$$\varphi(t,x)= e^{-i \lambda t}u(x),$$
where $\lambda\in \R$ stands for the frequency. Clearly, standing waves $e^{-i \lambda t}u(x)$ solves \eqref{eq1} if and only if the couple $(u, \lambda)$ satisfies the following stationary equation
\begin{equation}\label{E_lambda}
-\Delta u - \lambda u + (\left | x \right |^{-1}\ast \left | u \right |^2) u - |u|^{p-2}u = 0 \ \mbox{ in } \  \mathbb{R}^{3}.
\tag{$E_{\lambda}$}
\end{equation}

In \eqref{E_lambda}, when $\lambda\in \R$ appears as a fixed and assigned parameter, results concerning the existence or non-existence of solutions to \eqref{E_lambda}, has been widely established, see \cite{AP0, AP, DM, HK, R, R2, WZ} and the references therein. In particular, previous results on the multiplicity of solutions to \eqref{E_lambda} have also been obtained. We refer to \cite{ADP2, AR, CKW, GS1} and their references in that direction. In those references, solutions are obtained as critical points of the functional
$$I_{\lambda}(u): = \frac{1}{2}\left \| \triangledown u \right \|_{L^2(\R^3)}^2 - \frac{\lambda}{2}\|u\|_{L^2(\R^3)}^2+\frac{1}{4}\int_{\mathbb{R}^3}\int_{\mathbb{R}^3}\frac{\left | u(x) \right |^2\left | u(y) \right |^2}{\left | x-y \right |}dxdy-\frac{1}{p}\int_{\mathbb{R}^3}\left | u \right |^pdx, $$
which is well-defined and $C^1$ in $H^1(\R^3)$.\medskip

Alternatively, motivated by the fact that people are particularly interested in ``normalized solutions", one can search for solutions of \eqref{E_lambda} having a prescribed norm. Precisely, for given $c>0$, we look to
$$(u_c, \lambda_c)\in H^1(\R^3)\times \R\ \mbox{ solutions of } \eqref{E_lambda} \mbox{ with } \|u\|_{L^2(\R^3)}^2=c.$$

In this case, a solution $u_c\in H^1(\R^3)$ of \eqref{E_lambda} can be obtained as a critical point of the functional
\begin{eqnarray}\label{func1.1}
F(u):= \frac{1}{2}\|\nabla u\|_{L^2(\R^3)}^2 + \frac{1}{4}\int_{\R^3}\int_{\R^3}\frac{|u(x)|^2|u(y)|^2}{|x-y|}dxdy - \frac{1}{p}\int_{\R^3}|u|^pdx,
\end{eqnarray}
on the constraint
\begin{eqnarray}\label{intro-l2-constraint}
S(c): =  \Big \{ u\in H^1(\R^3) : \ \|u\|_{L^2(\R^3)}^2 = c, c>0 \Big\}.
\end{eqnarray}
The parameter $\lambda_c\in \R$, in this situation, is not fixed any more and it appears as a Lagrange parameter.\medskip

The above normalized problem associated to \eqref{E_lambda}, has been studied in the literature \cite{BS1, BS, BJL,CDSS, HLW, JL2, OSJS}. In the cited references, the existence and non-existence of normalized solutions to \eqref{E_lambda} are established, depending strongly on the value of $p\in (2,6)$ and of the parameter $c>0$. Precisely, it is proved that a solution which minimizes globally $F$ on $S(c)$, exists when $p\in (2,3)$ and $c>0$ small enough. When $p\in (3, \frac{10}{3})$, there exists a $c_0>0$ such that such a solution exists if and only if $c\geq c_0$. When $p\in (\frac{10}{3}, 6)$, it is not possible to find a solution as a global minimizer of $F$ on $S(c)$ since $\inf_{u\in S(c)}F(u)=-\infty$, however it is proved in \cite{BJL} that for $c>0$ sufficiently small, $F$ admits a critical point which minimizes the energy among all solutions on $S(c)$.\medskip

Our contribution in this paper is the multiplicity of normalized solutions to \eqref{E_lambda}. Namely, we prove that there exist infinitely many normalized solutions of \eqref{E_lambda}. Up to our knowledge, in the existing literature, results in that direction do not exist yet. The solutions are obtained as critical points of the functional $F$ on the constraint $S(c)$. Our main result is the following theorem:
\begin{theorem}\label{th-multiplicity}
Assume that $p \in (\frac{10}{3},6)$. There exists $c_0>0$ such that for any $c \in (0,c_0)$, the equation \eqref{E_lambda} admits an unbounded sequence of distinct pairs of radial solutions $(\pm u_n, \lambda_n)$ with $\|u_n\|_{L^2(\R^3)}^2 =c$ and $\lambda_n<0$ for each $n\in \N^+$.
\end{theorem}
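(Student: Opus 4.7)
The plan is to adapt to our Schr\"odinger-Poisson-Slater setting the minimax scheme of Bartsch and De Valeriola \cite{BV}, combining Jeanjean's rescaling trick with a Krasnoselskii-genus construction of infinitely many symmetric critical levels.

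\textbf{Radial reduction and auxiliary functional.} I would first restrict $F$ to the radial subspace $H^1_r(\R^3)$, where the embedding into $L^q$ is compact for every $q\in(2,6)$; by the principle of symmetric criticality, radial critical points of $F$ on $S_r(c) := S(c)\cap H^1_r(\R^3)$ are critical points of $F$ on $S(c)$. Next, introduce the $L^2$-preserving dilation $H(s,u)(x) := e^{3s/2}u(e^s x)$, $s\in\R$, and the auxiliary functional
$$\tilde F(s,u) := F(H(s,u)) = \frac{e^{2s}}{2}\|\nabla u\|_{L^2}^2 + \frac{e^s}{4}D(u) - \frac{e^{3(p-2)s/2}}{p}\|u\|_{L^p}^p$$
on $\R\times S_r(c)$, where $D(u) := \iint|u(x)|^2|u(y)|^2|x-y|^{-1}\,dx\,dy$. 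A direct computation gives $\partial_s\tilde F(s,u) = Q(H(s,u))$ with $Q(v) := \|\nabla v\|_{L^2}^2 + \tfrac{1}{4}D(v) - \tfrac{3(p-2)}{2p}\|v\|_{L^p}^p$, so critical points of $\tilde F$ project to critical points of $F|_{S_r(c)}$ lying on the Pohozaev-type manifold $V(c) := \{u\in S_r(c): Q(u) = 0\}$.

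\textbf{Minimax geometry and genus values.} Since $p>10/3$ implies $\tfrac{3(p-2)}{2}>2$, for each $u\in S_r(c)$ the map $s\mapsto\tilde F(s,u)$ tends to $0^+$ as $s\to-\infty$ (the $e^s$-term dominates) and to $-\infty$ as $s\to+\infty$; Gagliardo-Nirenberg and Hardy-Littlewood-Sobolev, together with taking $c$ small, give a uniform positive mountain-pass profile in $s$ and force $\inf_{V(c)}F>0$. Exploiting the $\Z_2$-symmetry $u\mapsto -u$, I would define, for each $n\in\N^+$,
$$c_n(c) := \inf_{A\in\mathcal A_n}\sup_{(s,u)\in A}\tilde F(s,u),$$
where $\mathcal A_n$ is a family of closed symmetric subsets of $\R\times S_r(c)$ of Krasnoselskii genus at least $n$ contained in a suitable sublevel set of $\tilde F$. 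Nonemptiness is obtained by mapping $(n{-}1)$-spheres of finite-dimensional subspaces of $H^1_r$ through $H(\cdot,\cdot)$ past the mountain; standard genus/intersection arguments combined with Gagliardo-Nirenberg growth estimates show that $c_n(c)$ is finite and that $c_n(c)\to+\infty$.

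\textbf{Compactness, sign of $\lambda_n$, and the main obstacle.} A $\Z_2$-equivariant deformation lemma on $\R\times S_r(c)$ produces, for each $n$, a Palais-Smale sequence $\{(s_k,u_k)\}$ for $\tilde F$ at level $c_n(c)$. By Jeanjean's trick, $\partial_s\tilde F(s_k,u_k) = Q(v_k)\to 0$ with $v_k := H(s_k,u_k)$, and combining this with $F(v_k) = \tilde F(s_k,u_k)\to c_n(c)$ through the identity
$$F(v) - \tfrac{2}{3(p-2)}Q(v) = \tfrac{3p-10}{6(p-2)}\|\nabla v\|_{L^2}^2 + \tfrac{3p-8}{12(p-2)}D(v),$$
which is positive-definite since $p>10/3$, bounds $\{v_k\}$ in $H^1_r(\R^3)$. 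The radial compact embedding and the continuity properties of the Coulomb term yield, along a subsequence, strong convergence to a critical point $u_n\in S_r(c)$ of $F$ with $F(u_n) = c_n(c)$. The Lagrange multiplier $\lambda_n$ satisfies, by Nehari combined with $Q(u_n) = 0$,
$$\lambda_n c = \tfrac{p-6}{3(p-2)}\|\nabla u_n\|_{L^2}^2 + \tfrac{5p-12}{6(p-2)}D(u_n);$$
the first coefficient is negative, and the Hardy-Littlewood-Sobolev bound $D(u_n)\lesssim c^{3/2}\|\nabla u_n\|_{L^2}$ together with the \emph{a priori} lower estimate $\|\nabla u_n\|_{L^2}\gtrsim c^{-\alpha}$ valid on $V(c)$ (with $\alpha = \tfrac{6-p}{2(3p-10)}>0$) force the kinetic term to dominate, yielding $\lambda_n<0$ once $c_0$ is small. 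The heart of the argument, and the main obstacle, is proving both that the $c_n(c)$ are distinct and that $c_n(c)\to+\infty$; this requires constructing high-genus test sets inside a common sublevel of $\tilde F$ and controlling the $\Z_2$-equivariant deformation flow on $\R\times S_r(c)$ so that the genus cannot drop across regular values, and it is precisely at this stage that the supercritical assumption $p>10/3$ and the smallness of $c$ are used essentially.
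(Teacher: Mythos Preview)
Your overall architecture --- restrict to $H^1_r$, use Jeanjean's dilation trick to produce Palais--Smale sequences with $Q(v_k)\to 0$, bound them via the identity $F-\tfrac{2}{3(p-2)}Q=\tfrac{3p-10}{6(p-2)}A+\tfrac{3p-8}{12(p-2)}B$, and conclude by showing $\lambda_n<0$ for $c$ small --- matches the paper. Your computation for the sign of $\lambda_n$ is also essentially the one in \cite{BJL}. The genuine gap is the minimax scheme.

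You propose $c_n(c)=\inf_{A\in\mathcal A_n}\sup_A\tilde F$ with $\mathcal A_n$ the closed symmetric subsets of $\R\times S_r(c)$ of Krasnoselskii genus $\geq n$ ``contained in a suitable sublevel set of $\tilde F$''. This cannot work as written: since $p>\tfrac{10}{3}$, $F$ (and hence $\tilde F$) is unbounded below on $S_r(c)$, so every sublevel set has infinite genus and, more to the point, there is no lower bound preventing $c_n(c)$ from collapsing; the paper explicitly flags this obstruction to classical genus arguments. You acknowledge that showing $c_n(c)\to+\infty$ is ``the main obstacle'' but you do not supply a mechanism. The paper circumvents this by a linking (fountain-type) construction rather than genus-of-sets: it fixes an increasing sequence $V_n$ of finite-dimensional subspaces, sets $B_n=\{u\in V_{n-1}^\perp\cap S_r(c):\ \|\nabla u\|_2^2=\rho_n\}$ with $\rho_n\to\infty$ chosen so that $b_n:=\inf_{B_n}F\to+\infty$, and defines $\gamma_n(c)$ as a minimax over maps $g:[0,1]\times(S_r(c)\cap V_n)\to S_r(c)$ with prescribed dilated boundary values. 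The key step is an intersection lemma, proved via the cohomological index of Conner--Floyd, guaranteeing that every such $g$ meets $B_n$; this forces $\gamma_n(c)\geq b_n\to\infty$ directly, without any appeal to sublevel-set genus.

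A smaller point: your compactness paragraph asserts strong convergence in $H^1_r$ \emph{before} establishing $\lambda_n<0$. Radial compactness gives $L^p$-convergence and hence $u_n\neq 0$ and a weak limit solving the equation, but recovering $\|u_n\|_2^2=c$ (i.e.\ strong $H^1$ convergence) uses the sign of $\lambda_n$; in the paper this is the order of Proposition~2.1. So your argument for $\lambda_n<0$ should be applied to the weak limit first, and only then upgraded to strong convergence.
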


Now let us underline the difficulties in showing this theorem. First, since the functional $F$ is unbounded from below on $S(c)$ when $p\in (\frac{10}{3},6)$, the genus of the sublevel set $F^{\alpha}:= \{u \in S(c) : F(u) \leq \alpha\}$ is always infinite. Thus to obtain the existence of infinitely many solutions, classical arguments based on the Kranoselski genus, see \cite{MS}, do not apply. \medskip

Secondly, it can be easily checked that the functional $F$, restricted to $S(c)$, does not satisfy the Palais-Smale condition, even working on the subspace $H^1_{r}(\R^3)$ of radially symmetric functions where one has the advantage of the compact embedding of $H_r^1(\R^3)$ into $L^q(\R^3)$ for $q \in (2, 6)$. \medskip


To overcome these difficulties we are inspired by a recent work of Bartsch and De Valeriola \cite{BV}.
In \cite{BV} the authors consider the problem of finding infinitely many critical points for
\begin{eqnarray}
J(u):= \frac{1}{2}\|\nabla u\|_{L^2(\R^3)}^2  - \frac{1}{p}\int_{\R^3}|u|^pdx,
\end{eqnarray}
on the constraint
\begin{eqnarray}\label{c4-constraint}
S_r(c): =  \Big \{ u\in H_r^1(\R^3) : \ \|u\|_{L^2(\R^3)}^2 = c, c>0 \Big\},
\end{eqnarray}
when $p \in (\frac{10}{3}, 6)$.
Actually in \cite{BV} more general nonlinearities can be handled and in any dimension $N \geq 2$. \medskip

In the problem treated in \cite{BV} the difficulties presented above already exist. To overcome these difficulties the authors present a new type of linking geometry for the functional $J$ on $S_r(c)$. This geometry is, according to the authors of \cite{BV}, motivated by the fountain theorem (see \cite{TB1}). In \cite{BV} to set up a min-max scheme and identify a sequence $\{l_n\} \subset \R$, $l_n \to \infty$ of suspected critical levels, the cohomological index for spaces with an action on the group $G=\{-1,1\}$ is used. Indeed observe that the functional $J$ is even, this is also the case of $F$. This index which was introduced in \cite{PCEF} permits to establish the key intersection property, see  \cite[Lemma 2.3]{BV} or our Lemma \ref{lm-paths}. The fact that the suspected critical levels $l_n$ do correspond to critical levels is then obtained using ideas from Jeanjean \cite{LJ}. The key point is the construction, for each fixed $n \in \N$, of a special Palais-Smale sequence associated with $l_n$. That construction leads easily to get the bounededness and further non-vanishing of the Palais-Smale sequence. In that aim one introduces an auxiliary functional which permits to incorporate into the variational procedure the information that any critical point of $J$ on $S_r(c)$ must satisfy a version of Pohozaev identity. Having obtained the boundedness and non-vanishing of the Palais-Smale sequence, the remaining is to show its compactness. The information that the suspected associated Lagrange multiplier is strictly negative is here crucially used. \medskip

In our proof of Theorem \ref{th-multiplicity} we follow mainly the strategy of \cite{BV}. However, due to the nonlocal term $(|x|^{-1}\ast |u|^2)u$ in \eqref{E_lambda}, new treatments are needed for our problem. In particular, the construction of a special Palais-Smale sequence at each suspected critical level is more delicate, see Lemma \ref{lm-PSC}. In addition, the restriction that $c \in (0, c_0)$ originates in the need to show that the suspected associated Lagrange multipliers are strictly negative. This property is used to show that the weak limit of our Palais-Smale sequences does belong to $S_r(c)$. A similar limitation on $c>0$ was already necessary in \cite{BJL} where the existence of just one critical point of $F$ on $S(c)$ is proved. More generally in the proofs of this paper we make use of some results derived in \cite{BJL}.\\

\textbf{Notations}:
In the paper, for simplicity we write $L^{s}(\R^{3}), H_r^{1}(\R^{3}) ....$,  and for any $1\leq s < +\infty$, $L^s(\R^3)$ is the usual Lebesgue space endowed
with the norm
$$\|u\|_{s}^s:=\int_{\R^{3}} |u|^sdx,$$
and $H_r^1(\R^3)$ is the subspace of radially symmetric functions, endowed with the norm
$$\|u\|^2:=\int_{\R^{3}} |\nabla u|^2dx+\int_{\R^{3}} |u|^2dx.$$
Moreover we define, for short,
the following quantities
$$A(u):=\int_{\R^{3}} |\nabla u|^{2}dx,\ \ \ B(u):=\int_{\mathbb{R}^3}\int_{\mathbb{R}^3}\frac{\left | u(x) \right |^2\left | u(y) \right |^2}{\left | x-y \right |}dxdy$$
$$ \ C(u):=\int_{\R^{3}} |u|^{p}dx, \ \ \ D(u):=\int_{\mathbb{R}^3}\left | u\right |^2dx. $$

\section{Proofs of the main results}
We first establish some preliminary results.  Let $\{V_n\}\subset H_r^1(\R^3)$ be a strictly increasing sequence of finite-dimensional linear subspaces in $H_r^1(\R^3)$, such that $\bigcup_n V_n$ is dense in $H_r^1(\R^3)$. We denote by $V_{n}^{\perp}$ the orthogonal space of $V_n$ in $H_r^1(\R^3)$. Then
\begin{lemma}\cite[Lemma 2.1]{BV}\label{lm-mu}
Assume that $p\in (2,6)$. Then there holds
\begin{eqnarray*}
\mu_n:= \inf_{u\in V_{n-1}^{\perp}}\frac{\int_{\R^3}(|\nabla u|^2 + |u|^2)dx}{(\int_{\R^3}|u|^pdx)^{2/p}}= \inf_{u\in V_{n-1}^{\perp}}\frac{\|u\|^2}{\|u\|_p^2} \to \infty,\ \mbox{ as } n\to \infty.
\end{eqnarray*}
\end{lemma}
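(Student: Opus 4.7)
The plan is to derive the lemma by combining three standard ingredients: monotonicity of $\mu_n$, the compact embedding $H^1_r(\R^3)\hookrightarrow L^p(\R^3)$ for $p\in(2,6)$ (Strauss), and the density of $\bigcup_n V_n$ in $H^1_r(\R^3)$. I would argue by contradiction: if $\mu_n$ does not diverge, the compact embedding produces a nonzero weak limit of an almost-optimizing sequence, while the density of the $V_n$ forces that weak limit to be $0$.

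More concretely, I would proceed as follows. First, observe that the sequence $\{\mu_n\}$ is nondecreasing: since $V_{n-1}\subset V_n$, one has $V_n^{\perp}\subset V_{n-1}^{\perp}$, so the infimum defining $\mu_{n+1}$ is taken over a smaller set and hence $\mu_{n+1}\ge \mu_n$. Therefore $\mu_n$ either tends to $+\infty$ or is bounded. Assume for contradiction that there exists $M>0$ such that $\mu_n\le M$ for every $n$. Then for each $n$ one can choose $u_n\in V_{n-1}^{\perp}$ with $\|u_n\|=1$ and $\|u_n\|_p^{2}\ge \frac{1}{2M}$.

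The sequence $\{u_n\}$ is bounded in $H^1_r(\R^3)$, so up to a subsequence $u_n\rightharpoonup u$ weakly in $H^1_r(\R^3)$ and, by Strauss' compact embedding, $u_n\to u$ strongly in $L^p(\R^3)$. Now fix any $v\in \bigcup_{k\ge 0}V_k$; then $v\in V_{k_0}$ for some $k_0$, and for all $n>k_0$ we have $u_n\in V_{n-1}^{\perp}\subset V_{k_0}^{\perp}$, i.e.\ $(u_n,v)_{H^1}=0$. Passing to the weak limit gives $(u,v)_{H^1}=0$. Since $v\in\bigcup_k V_k$ was arbitrary and $\bigcup_k V_k$ is dense in $H^1_r(\R^3)$, we conclude $u=0$.

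Strong convergence in $L^p$ then yields $\|u_n\|_p\to 0$, contradicting $\|u_n\|_p^{2}\ge \frac{1}{2M}>0$. Hence $\mu_n\to\infty$, which is the claim. The only nontrivial ingredient is the compact embedding $H^1_r(\R^3)\hookrightarrow L^p(\R^3)$ for $p\in(2,6)$; everything else is a soft density/weak-convergence argument, so I do not expect any real obstacle. If one wanted to avoid citing Strauss explicitly, one could instead estimate $\|u_n\|_p$ by an interpolation between $L^2$ and $L^6$ combined with the decay estimates for radial $H^1$ functions, but the compact embedding is by far the cleanest route.
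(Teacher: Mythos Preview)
Your argument is correct. Note, however, that the paper does not give its own proof of this lemma: it is simply quoted as \cite[Lemma~2.1]{BV}. What you have written is precisely the standard proof behind that citation --- monotonicity of $\mu_n$, contradiction via a normalized almost-optimizing sequence, compactness of the Strauss embedding $H^1_r(\R^3)\hookrightarrow L^p(\R^3)$ for $p\in(2,6)$, and the density of $\bigcup_n V_n$ to force the weak limit to vanish --- so there is nothing to compare.
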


Now for $c>0$ fixed and for each $n\in \N$, we define
$$\rho_n:= L^{-\frac{2}{p-2}}\cdot \mu_n^{\frac{2}{p-2}},\ \mbox{ with } L= \max_{x>0} \frac{(x^2 + c)^{p/2}}{x^p+ c^{p/2}},$$
and
\begin{eqnarray}\label{B_n}
B_n:= \{u\in V_{n-1}^{\perp}\cap S_r(c):\ \|\nabla u\|_2^2 = \rho_n \}.
\end{eqnarray}
We also define
\begin{eqnarray}\label{bn}
b_n:= \inf_{u\in B_n}F(u).
\end{eqnarray}
Then we have
\begin{lemma}\label{lm-bn}
For any $p\in (2,6)$, $b_n \to +\infty$ as $n\to \infty$. In particular we can assume without restriction that $b_n \geq 1$ for all $n \in \N$.
\end{lemma}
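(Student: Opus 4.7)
The plan is to obtain a lower bound for $F(u)$ that is uniform over $u \in B_n$ and that tends to $+\infty$ with $n$. Since $B(u) \geq 0$, the only term in $F$ that could compromise the growth coming from $\frac{1}{2}\|\nabla u\|_2^2 = \frac{1}{2}\rho_n$ is the $L^p$-term, so everything reduces to controlling $\|u\|_p^p$ in terms of $\rho_n$ and $\mu_n$.

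The first step is to exploit the two constraints that define $B_n$: because $u \in V_{n-1}^{\perp}$, Lemma~\ref{lm-mu} gives
\begin{equation*}
\|u\|_p^2 \leq \frac{\|u\|^2}{\mu_n} = \frac{\|\nabla u\|_2^2 + \|u\|_2^2}{\mu_n} = \frac{\rho_n + c}{\mu_n}.
\end{equation*}
Raising to the power $p/2$ and applying the inequality $(x^2+c)^{p/2} \leq L(x^p + c^{p/2})$ with $x = \sqrt{\rho_n}$ — which is precisely the reason $L$ was introduced — yields
\begin{equation*}
\|u\|_p^p \leq \frac{L\bigl(\rho_n^{p/2} + c^{p/2}\bigr)}{\mu_n^{p/2}}.
\end{equation*}

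Inserting this into $F$ and discarding the nonnegative Coulomb term produces
\begin{equation*}
F(u) \geq \frac{1}{2}\rho_n - \frac{L\rho_n^{p/2}}{p\mu_n^{p/2}} - \frac{Lc^{p/2}}{p\mu_n^{p/2}}.
\end{equation*}
The crucial observation is now algebraic: the definition $\rho_n = L^{-2/(p-2)}\mu_n^{2/(p-2)}$ is exactly tailored so that $\rho_n^{(p-2)/2} = \mu_n/L$, hence $L\rho_n^{p/2}/\mu_n^{p/2} = \rho_n/\mu_n^{(p-2)/2}$. This converts the lower bound into
\begin{equation*}
F(u) \geq \rho_n\left(\frac{1}{2} - \frac{1}{p\,\mu_n^{(p-2)/2}}\right) - \frac{Lc^{p/2}}{p\mu_n^{p/2}}.
\end{equation*}

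To conclude, I would invoke $p>2$ together with $\mu_n \to \infty$ from Lemma~\ref{lm-mu}: the parenthesized factor tends to $\tfrac12$, the last summand vanishes, and $\rho_n \to +\infty$. Since the inequality is uniform in $u \in B_n$, passing to the infimum gives $b_n \to +\infty$. The remark that $b_n \geq 1$ without loss of generality is then immediate by discarding finitely many indices. There is no serious obstacle here; the only subtle point is the algebraic choice of $\rho_n$, which must be matched exactly to $\mu_n$ and $L$ so that the two leading $\rho_n$-terms combine to leave a positive coefficient while still forcing $\rho_n$ itself to diverge.
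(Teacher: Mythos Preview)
Your argument is correct and follows essentially the same route as the paper: drop the nonnegative Coulomb term, bound $\|u\|_p^p$ via the definition of $\mu_n$, split $(\rho_n+c)^{p/2}$ using the constant $L$, and then exploit the algebraic choice of $\rho_n$ to make the leading terms combine with a positive coefficient. The only cosmetic difference is that the paper writes the intermediate bound with $\mu_n$ in the denominator and arrives at the cleaner factor $\bigl(\tfrac12-\tfrac1p\bigr)\rho_n$, whereas you keep the (correct) power $\mu_n^{p/2}$ coming from $\|u\|_p^p\le \|u\|^p/\mu_n^{p/2}$ and obtain $\rho_n\bigl(\tfrac12-\tfrac{1}{p\,\mu_n^{(p-2)/2}}\bigr)$; both forms give $b_n\to+\infty$ for the same reason.
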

\begin{proof}
For any $u\in B_n$, we have that
\begin{eqnarray*}
F(u) &=& \frac{1}{2}\|\nabla u\|_2^2 + \frac{1}{4}\int_{\R^3}\int_{\R^3}\frac{|u(x)|^2|u(y)|^2}{|x-y|}dxdy - \frac{1}{p}\int_{\R^3}|u|^pdx\\
&\geq& \frac{1}{2}\|\nabla u\|_2^2 - \frac{1}{p\mu_n} \Big(\|\nabla u\|_2^2 + c \Big)^{\frac{p}{2}}\\
&\geq& \frac{1}{2}\|\nabla u\|_2^2 - \frac{L}{p\mu_n} \Big(\|\nabla u\|_2^p + c^{\frac{p}{2}} \Big)\\
&\geq& (\frac{1}{2}- \frac{1}{p})\rho_n - \frac{L}{p\mu_n}c^{\frac{p}{2}}.
\end{eqnarray*}
From this estimate and Lemma \ref{lm-mu}, it follows since  $p>2$, that $b_n\to +\infty$ as $n\to \infty$. Now, considering the sequence $\{V_n\} \subset H_r^1(\R^3)$  only from a $n_0 \in \N$ such that $b_n \geq 1$ for any $n \geq n_0$ it concludes the proof of the lemma.
\end{proof}

Next we start to set up our min-max scheme. First we introduce the map
\begin{eqnarray}\label{kappa}
\kappa: H^1_r(\R^3)\times\R &\longrightarrow& H^1_r(\R^3) \nonumber \\
(u,\ \theta) &\longmapsto& \kappa(u,\theta)(x):= e^{\frac{3}{2}\theta}u(e^{\theta} x).
\end{eqnarray}
Observe that for any given $u\in S_r(c)$, we have $\kappa(u,\theta)\in S_r(c)$ for all $\theta\in \R$. Also from \cite[Lemma 2.1]{BJL}, we know that
\begin{equation}\label{behaviscaling}
\left\{
\begin{array}{l}
A(\kappa(u,\theta))\to 0,\quad F(\kappa(u,\theta))\to 0,\quad \ \mbox{ as } \theta\to -\infty, \\
A(\kappa(u,\theta))\to +\infty,\ F(\kappa(u,\theta))\to -\infty,\ \mbox{ as } \theta\to +\infty.
\end{array}
\right.
\end{equation}
Thus, using the fact that $V_n$ is finite dimensional, we deduce that, for each $n \in \N$, there exists a $\theta_n>0$, such that
\begin{eqnarray}\label{g_n}
\bar{g}_n:\ [0,1]\times (S_r(c)\cap V_n) \to S_r(c), \ \  \bar{g}_n(t, u) = \kappa (u, (2t-1)\theta_n)
\end{eqnarray}
satisfies
\begin{equation}\label{pro-g_n}
\left\{
\begin{array}{l}
A(\bar{g}_n(0,u))< \rho_n,\ A(\bar{g}_n(1,u))> \rho_n, \\
F(\bar{g}_n(0,u)) < b_n ,\ F(\bar{g}_n(1,u)) < b_n.
\end{array}
\right.
\end{equation}
\medskip

Now we define
\begin{align}\label{declass}
\Gamma_n: =\Big\{g:\ &[0,1]\times (S_r(c)\cap V_n) \to S_r(c)\ |\ g \mbox{ is continuous, odd in } u \\
&\mbox{and such that } \forall u:\ g(0,u)= \bar{g}_n(0,u),\ g(1,u)=\bar{g}_n(1,u)\Big\}.
\end{align}
Clearly $\bar{g}_n \in \Gamma_n$. Now we give the key intersection result, due to \cite{BV}.
\begin{lemma}\label{lm-paths}
For each $n\in \N$,
\begin{eqnarray}\label{gamma_n}
\gamma_n(c) := \inf_{g \in \Gamma_n}\max\limits_{\substack{0\leq t\leq 1 \\ u\in S_r(c)\cap V_n}}F(g(t,u))\geq b_n.
\end{eqnarray}
\end{lemma}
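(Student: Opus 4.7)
The plan is to reduce the lemma to a purely topological intersection property and then invoke the same kind of Fadell--Rabinowitz cohomological index argument used in \cite[Lemma~2.3]{BV}; the nonlocal Coulombic term in $F$ plays no role in this step. Concretely, it suffices to show that every $g\in\Gamma_n$ sends some point of its domain into $B_n$, i.e.
\begin{eqnarray*}
g\bigl([0,1]\times (S_r(c)\cap V_n)\bigr)\cap B_n \neq \emptyset.
\end{eqnarray*}
Indeed, once this is known, picking $(t^*,u^*)$ with $g(t^*,u^*)\in B_n$ gives $\max F(g(t,u))\geq F(g(t^*,u^*))\geq b_n$ by the very definition \eqref{bn} of $b_n$, and infimizing over $g\in\Gamma_n$ yields $\gamma_n(c)\geq b_n$.

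To prove the intersection claim, I would let $P_{n-1}$ denote the orthogonal projection of $H^1_r(\R^3)$ onto $V_{n-1}$ and introduce the continuous map
\begin{eqnarray*}
\Phi:[0,1]\times (S_r(c)\cap V_n)\to V_{n-1}\oplus \R,\qquad \Phi(t,u):=\bigl(P_{n-1}g(t,u),\ A(g(t,u))-\rho_n\bigr).
\end{eqnarray*}
A zero of $\Phi$ is exactly a pair $(t,u)$ with $g(t,u)\in V_{n-1}^\perp$ and $A(g(t,u))=\rho_n$, i.e.\ $g(t,u)\in B_n$. Note that the first coordinate of $\Phi$ is odd in $u$ (since $g$ is odd in $u$ and $P_{n-1}$ is linear), while the second coordinate is even in $u$; moreover by \eqref{pro-g_n} the second coordinate is strictly negative on $\{t=0\}$ and strictly positive on $\{t=1\}$. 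I would then introduce the closed invariant set
\begin{eqnarray*}
\Sigma:=\{(t,u)\in [0,1]\times(S_r(c)\cap V_n)\ :\ A(g(t,u))=\rho_n\},
\end{eqnarray*}
equipped with the $\mathbb{Z}/2$-action $(t,u)\mapsto(t,-u)$, and aim to produce a zero of $P_{n-1}g$ on $\Sigma$.

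The topological heart of the argument is the cohomological index $i(\cdot)$ of \cite{PCEF}. The intermediate value theorem together with the boundary data on $g$ shows that $\Sigma$ separates the two ends of the cylinder $[0,1]\times(S_r(c)\cap V_n)$; standard monotonicity and subadditivity properties of $i$ then deliver $i(\Sigma)\geq i(S_r(c)\cap V_n)=\dim V_n$. If $P_{n-1}g$ did not vanish on $\Sigma$, the equivariant map $(t,u)\mapsto P_{n-1}g(t,u)/\|P_{n-1}g(t,u)\|$ would send $\Sigma$ into the unit sphere of $V_{n-1}$, forcing $i(\Sigma)\leq \dim V_{n-1}<\dim V_n$ (the inequality is strict since $\{V_n\}$ is strictly increasing), a contradiction. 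Hence $\Sigma$ contains a zero of $P_{n-1}g$, as required. The main obstacle is exactly this index estimate $i(\Sigma)\geq \dim V_n$: it is a purely topological separation statement, independent of the nonlocal term, and is precisely the content of \cite[Lemma~2.3]{BV}, which is why the proof transplants to our setting without any new ingredient.
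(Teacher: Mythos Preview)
Your proposal is correct and follows exactly the paper's approach: reduce the inequality to the intersection property $g([0,1]\times(S_r(c)\cap V_n))\cap B_n\neq\emptyset$ and then invoke the cohomological-index argument of \cite[Lemma~2.3]{BV}, noting that the nonlocal term plays no role. The paper's own proof is nothing more than this reduction together with a direct citation of \cite[Lemma~2.3]{BV}; you have additionally sketched the content of that cited lemma, which is fine but not required.
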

\begin{proof}
The point to show that for each $g \in \Gamma_n$ there exists a pair $(t,u)\in [0,1]\times (S_r(c)\cap V_n)$, such that $g(t,u)\in B_n$ with $B_n$ defined in \eqref{B_n}.
But this result can be proved exactly as the corresponding result for $J$ in \cite{BV}, see \cite[Lemma 2.3]{BV}.
\end{proof}

\begin{remark}\label{classical}
Note that by Lemma \ref{lm-paths} and \eqref{pro-g_n} we have that for any $g \in \Gamma_n$
\begin{equation}\label{gamma_n}
\gamma_n(c) \geq b_n >  \max \Big\{ \max_{u\in S_r(c)\cap V_n} F(g(0,u)),  \max_{u\in S_r(c)\cap V_n} F(g(1,u))\Big\}.\nonumber
\end{equation}
\end{remark}

Next, we shall prove that the sequence $\{\gamma_n(c)\}$ is indeed a sequence of critical values for $F$ restricted to $S_r(c)$. In this aim, we first show that there exists a bounded Palais-Smale sequence at each level $\gamma_n(c)$. From now on,  we fix an arbitrary $n\in \N$.
\begin{lemma}\label{lm-PSC}
For any fixed $c>0$, there exists a sequence $\{u_k\}\subset S_r(c)$ satisfying
\begin{eqnarray}\label{PSC}
\left\{
\begin{array}{l}
F(u_k)\to \gamma_n(c), \\
F'|_{S_r(c)}(u_k)\to 0, \quad \mbox{ as } k\to \infty,\\
Q(u_k)\to 0,
\end{array}
\right.
\end{eqnarray}
where
\begin{eqnarray}\label{sequence-Q}
Q(u):= A(u) + \frac{1}{4}B(u) - \frac{3(p-2)}{2p}C(u).
\end{eqnarray}
In particular $\{u_k\}\subset S_r(c)$ is bounded.
\end{lemma}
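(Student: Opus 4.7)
The plan is to use the auxiliary functional technique of Jeanjean~\cite{LJ}, working on the augmented constraint $S_r(c)\times \R$, so that the extra Pohozaev-type condition $Q(u_k)\to 0$ appears automatically from the $\R$-factor. Define $\tilde F: H^1_r(\R^3)\times \R\to \R$ by $\tilde F(u,\theta):=F(\kappa(u,\theta))$. Using the scalings $A(\kappa(u,\theta))=e^{2\theta}A(u)$, $B(\kappa(u,\theta))=e^{\theta}B(u)$, $C(\kappa(u,\theta))=e^{3(p-2)\theta/2}C(u)$, one verifies
\[
\partial_\theta \tilde F(u,\theta) = Q(\kappa(u,\theta)),
\]
and that the partial derivative in $u$, restricted to $T_u S_r(c)$, transports under $\kappa(\cdot,\theta)$ to $F'|_{S_r(c)}$ evaluated at $\kappa(u,\theta)$.

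Introduce the augmented min-max class
\[
\tilde \Gamma_n := \Big\{ \tilde g : [0,1]\times (S_r(c)\cap V_n) \to S_r(c)\times \R \ \Big|\ \tilde g \text{ continuous, odd in } u,\ \tilde g(i,u)=(\bar g_n(i,u),0),\ i=0,1 \Big\}
\]
and set $\tilde \gamma_n(c) := \inf_{\tilde g \in \tilde \Gamma_n} \max_{(t,u)} \tilde F(\tilde g(t,u))$. The map $g\mapsto (g,0)$ sends $\Gamma_n$ into $\tilde \Gamma_n$ preserving the max value, while conversely $(\tilde g_1,\tilde g_2)\mapsto \kappa(\tilde g_1,\tilde g_2)$ sends $\tilde\Gamma_n$ into $\Gamma_n$ with the same max value. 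This yields $\tilde \gamma_n(c)=\gamma_n(c)$.

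Apply an Ekeland/deformation argument to $\tilde F$ on $S_r(c)\times \R$ equipped with the product Riemannian structure. By Remark~\ref{classical}, $\gamma_n(c)$ strictly exceeds the endpoint values of every $\tilde g \in \tilde \Gamma_n$, so a standard pseudo-gradient flow on $S_r(c)\times \R$ can be built that fixes the boundary $\{0,1\}\times (S_r(c)\cap V_n)$ and preserves both continuity and oddness in $u$. This produces $(v_k,\theta_k) \in S_r(c)\times \R$ with $\tilde F(v_k,\theta_k)\to \gamma_n(c)$, $\partial_\theta \tilde F(v_k,\theta_k)\to 0$, and $\|\partial_u \tilde F(v_k,\theta_k)|_{T_{v_k}S_r(c)}\|_\ast \to 0$, and one can arrange $\theta_k$ to remain bounded. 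Setting $u_k:=\kappa(v_k,\theta_k)$ and using scale-equivariance of $S_r(c)$ then yields precisely the three convergences in \eqref{PSC}.

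For the boundedness, eliminate $C(u)$ from $F$ via the definition of $Q$ to obtain
\[
F(u)=\frac{3p-10}{6(p-2)}A(u)+\frac{3p-8}{12(p-2)}B(u)+\frac{2}{3(p-2)}Q(u).
\]
Since $p\in(\tfrac{10}{3},6)$, both coefficients of $A(u)$ and $B(u)$ are strictly positive and $B(u)\geq 0$; combined with $F(u_k)\to \gamma_n(c)$, $Q(u_k)\to 0$, and $\|u_k\|_2^2=c$, this forces $A(u_k)$ to remain bounded, hence $\{u_k\}$ is bounded in $H^1_r(\R^3)$. The main obstacle is the careful construction of the pseudo-gradient deformation on $S_r(c)\times \R$ that stays inside $\tilde \Gamma_n$ despite the presence of the nonlocal term; however, the scaling $B(\kappa(u,\theta))=e^\theta B(u)$ is smooth and monotone in $\theta$, so the argument of \cite{BV,LJ} adapts with only bookkeeping changes.
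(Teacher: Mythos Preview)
Your proposal is correct and follows essentially the same route as the paper: introduce $\tilde F(u,\theta)=F(\kappa(u,\theta))$ on $S_r(c)\times\R$, show $\tilde\gamma_n(c)=\gamma_n(c)$ via the maps $g\mapsto(g,0)$ and $\tilde g\mapsto\kappa\circ\tilde g$, apply the Ekeland-type result from \cite{LJ} (stated in the paper as Lemma~\ref{lm-ekeland1}) to obtain $(v_k,\theta_k)$, set $u_k=\kappa(v_k,\theta_k)$, and deduce boundedness from the identity \eqref{FandQ}. The one place where the paper is more explicit than your sketch is the control of $\theta_k$: by taking the near-optimal paths in the special form $\tilde g_k=(g_k,0)$ with $g_k\in\Gamma_n$, the closeness estimate (property (ii) of Lemma~\ref{lm-ekeland1}) forces $|\theta_k|\leq 1/\sqrt{k}\to 0$, and it is this bound that makes the transfer of the $u$-derivative from $\tilde F$ to $F'|_{S_r(c)}$ go through with a uniform constant---your phrase ``one can arrange $\theta_k$ to remain bounded'' is exactly this step.
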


To find such a Palais-Smale sequence, we apply the approach developed by Jeanjean \cite{LJ}, already applied in \cite{BV}. First, we introduce the auxiliary functional
$$\widetilde{F}: S_r(c) \times \R \to \R,\qquad  (u, \theta)\mapsto F(\kappa(u,\theta)),$$
where $\kappa(u,\theta)$ is given in \eqref{kappa}, and the set
\begin{align*}
\widetilde{\Gamma}_n:= \Big\{\widetilde{g}:\ &[0,1]\times (S_r(c)\cap V_n) \to S_r(c) \times \R \ | \ \widetilde{g} \mbox{ is continuous, odd in } u, \\
&\mbox{and such that } \kappa \circ \widetilde{g} \in \Gamma_n   \Big\}.
\end{align*}
Clearly, for any $g\in \Gamma_n$, $\widetilde{g}:=(g, 0) \in \widetilde{\Gamma}_n$.\\

Observe that defining
$$\widetilde{\gamma}_n(c):= \inf_{\widetilde{g}\in \widetilde{\Gamma}_n}\max\limits_{\substack{0\leq t\leq 1 \\ u\in S_r(c)\cap V_n}}\widetilde{F}(\widetilde{g}(t,u)),$$
we have that $\widetilde{\gamma}_n(c) = \gamma_n(c)$. Indeed, by the definitions of $\widetilde{\gamma}_n(c)$ and $\gamma_n(c)$, this identity follows immediately from the fact that the maps
$$\varphi: \Gamma_n \longrightarrow \widetilde{\Gamma}_n,\ g \longmapsto \varphi(g):=(g,0),$$
and
$$\psi: \widetilde{\Gamma}_n \longrightarrow \Gamma_n,\ \widetilde{g} \longmapsto \psi(\widetilde{g}):=\kappa \circ \widetilde{g},$$
satisfy
$$\widetilde{F}(\varphi(g)) = F(g)\ \mbox{ and }\ F(\kappa \circ \widetilde{g}) = \widetilde{F}(\widetilde{g}).$$

To prove Lemma \ref{lm-PSC} we also need the following result, which was established by Ekeland's variational principle in \cite[Lemma 2.3]{LJ}. We denote by $E$ the set $ H^1_r(\R^3) \times \R$ equipped with $\|\cdot\|_E^2 = \|\cdot\|^2 + |\cdot|_{\R}^2$, and by $E^{\ast}$ its dual space.
\begin{lemma}\label{lm-ekeland1}
Let $\varepsilon>0$. Suppose that $\widetilde{g}_0 \in \widetilde{\Gamma}_n$ satisfies
$$\max\limits_{\substack{0\leq t\leq 1 \\ u\in S_r(c)\cap V_n}}\widetilde{F}(\widetilde{g}_0(t,u))\leq \widetilde{\gamma }_n(c)+ \varepsilon.$$
Then there exists a pair of $(u_0, \theta_0)\in S_r(c)\times \R$ such that:
\begin{itemize}
  \item [(1)] $\widetilde{F}(u_0,\theta_0) \in [\widetilde{\gamma}_n(c)- \varepsilon, \widetilde{\gamma}_n(c) + \varepsilon]$;
  \item [(2)] $\min\limits_{\substack{0\leq t\leq 1 \\ u\in S_r(c)\cap V_n}} \| (u_0,\theta_0)-\widetilde{g}_k(t, u) \|_E \leq \sqrt{\varepsilon}$;
  \item [(3)] $\| \widetilde{F}'|_{S_r(c) \times \R}(u_0,\theta_0)  \|_{E^{\ast}}\leq 2\sqrt{\varepsilon}$,\ i.e.
$$|\langle \widetilde{F}'(u_0, \theta_0), z \rangle_{E^{\ast}\times E}  |\leq 2\sqrt{\varepsilon}\left \| z \right \|_E,$$ holds for all $z \in \widetilde{T}_{(u_0,\theta_0)}:= \{(z_1,z_2) \in E, \langle u_0, z_1\rangle_{L^2}=0\}$.
\end{itemize}
\end{lemma}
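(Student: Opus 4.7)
The plan is to apply Ekeland's variational principle, in the spirit of Jeanjean \cite{LJ}, to the min-max class $\widetilde{\Gamma}_n$ equipped with the uniform metric, and then convert the resulting near-minimizer into the pointwise statements (1)--(3) by a standard deformation/contradiction argument using a pseudo-gradient of $\widetilde{F}|_{S_r(c)\times \R}$.

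First I would endow $\widetilde{\Gamma}_n$ with the metric
$$d(\widetilde{g}_1,\widetilde{g}_2):= \max_{(t,u)\in [0,1]\times (S_r(c)\cap V_n)}\|\widetilde{g}_1(t,u)-\widetilde{g}_2(t,u)\|_E$$
and verify completeness: the uniform limit of such maps is continuous, takes values in $S_r(c)\times\R$, remains odd in $u$, and since $\kappa:H^1_r(\R^3)\times\R\to H^1_r(\R^3)$ is continuous, the endpoint conditions $\kappa\circ\widetilde{g}(i,u)=\bar g_n(i,u)$ for $i=0,1$ are preserved. The sup-functional $\Psi(\widetilde{g}):=\max_{(t,u)}\widetilde{F}(\widetilde{g}(t,u))$ is well defined and continuous because $[0,1]\times (S_r(c)\cap V_n)$ is compact (here the finite dimensionality of $V_n$ is essential) and $\widetilde{F}$ is continuous on $S_r(c)\times \R$. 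Since $\Psi(\widetilde{g}_0)\leq \widetilde{\gamma}_n(c)+\varepsilon$ and $\inf_{\widetilde{\Gamma}_n}\Psi=\widetilde{\gamma}_n(c)$, Ekeland's principle produces $\widetilde{g}_\varepsilon\in \widetilde{\Gamma}_n$ with $d(\widetilde{g}_\varepsilon,\widetilde{g}_0)\leq \sqrt{\varepsilon}$, $\Psi(\widetilde{g}_\varepsilon)\in[\widetilde{\gamma}_n(c)-\varepsilon,\widetilde{\gamma}_n(c)+\varepsilon]$, and
$$\Psi(\widetilde{g}_\varepsilon)\leq \Psi(\widetilde{g})+\sqrt{\varepsilon}\, d(\widetilde{g}_\varepsilon,\widetilde{g})\quad\text{for every }\widetilde{g}\in\widetilde{\Gamma}_n.$$

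Next I would show that at least one point $(u_0,\theta_0)$ in the image of the near-max set
$$M(\widetilde{g}_\varepsilon):=\{(t,u): \widetilde{F}(\widetilde{g}_\varepsilon(t,u))\geq \Psi(\widetilde{g}_\varepsilon)-\varepsilon\}$$
under $\widetilde{g}_\varepsilon$ satisfies (1), (2), (3). Properties (1) and (2) follow at once from the Ekeland estimates combined with $d(\widetilde{g}_\varepsilon,\widetilde{g}_0)\leq \sqrt{\varepsilon}$. For (3), I argue by contradiction: if $\|\widetilde{F}'|_{S_r(c)\times\R}(u,\theta)\|_{E^\ast}>2\sqrt{\varepsilon}$ for every $(u,\theta)\in \widetilde{g}_\varepsilon(M(\widetilde{g}_\varepsilon))$, then on a neighborhood of this set there exists a locally Lipschitz pseudo-gradient vector field $V$ for $\widetilde{F}|_{S_r(c)\times\R}$ with $\langle \widetilde{F}'(u,\theta),V(u,\theta)\rangle\leq -2\sqrt{\varepsilon}\,\|V(u,\theta)\|_E$. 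Integrating $V$ (after multiplying by a cut-off supported in this neighborhood) yields a deformation family $\eta_s:S_r(c)\times\R\to S_r(c)\times\R$. Setting $\widetilde{g}_s:=\eta_s\circ\widetilde{g}_\varepsilon$ I obtain $d(\widetilde{g}_s,\widetilde{g}_\varepsilon)\leq s$ and $\Psi(\widetilde{g}_s)\leq \Psi(\widetilde{g}_\varepsilon)-\tau s$ for some $\tau$ strictly larger than $\sqrt{\varepsilon}$ and for $s>0$ small, which contradicts the Ekeland inequality.

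The main obstacle is to verify that the deformed map $\widetilde{g}_s$ still belongs to $\widetilde{\Gamma}_n$, and three compatibilities must be arranged. First, the endpoint conditions at $t=0,1$: by \eqref{pro-g_n} and the compactness of $S_r(c)\cap V_n$, there exists $\delta>0$ such that $\widetilde{F}(\widetilde{g}_\varepsilon(i,u))\leq b_n-\delta\leq \widetilde{\gamma}_n(c)-\delta$ for $i=0,1$ and all $u$, so shrinking the cut-off makes $\eta_s$ equal to the identity there. Second, the oddness in $u$: it is preserved by choosing $V$ equivariant under $(u,\theta)\mapsto(-u,\theta)$, which is possible because $\widetilde{F}$ itself is even in $u$. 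Third, the $L^2$-constraint: the first component of $V$ must be taken in the tangent space $\{z_1:\langle u,z_1\rangle_{L^2}=0\}$, so working with the pseudo-gradient of the restricted functional $\widetilde{F}|_{S_r(c)\times\R}$ and integrating on the manifold $S_r(c)\times\R$ keeps the flow inside $S_r(c)\times\R$. Once these three points are in place the contradiction closes, and the existence of $(u_0,\theta_0)$ with properties (1)--(3) follows.
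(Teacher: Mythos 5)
Your proposal is correct and follows essentially the same route as the paper, which does not reprove this lemma but simply invokes Jeanjean \cite[Lemma~2.3]{LJ}, whose proof is exactly the argument you describe: Ekeland's variational principle on the class $\widetilde{\Gamma}_n$ with the uniform metric, followed by a pseudo-gradient deformation of the near-maximal set to produce the contradiction. The only (standard, harmless) caveat is that your cut-off step keeping the endpoints fixed needs $\varepsilon$ smaller than the gap $\widetilde{\gamma}_n(c)-\max_{i=0,1}\max_u \widetilde{F}(\widetilde{g}_0(i,u))>0$ guaranteed by Remark~\ref{classical}, which is no restriction since the lemma is applied with $\varepsilon=1/k\to 0$.
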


Now we can give
\begin{proof}[Proof of Lemma \ref{lm-PSC}]
From the definition of $\gamma_n(c)$, we know that for each $k\in \N$, there exists a $g_k\in \Gamma_n$ such that
$$\max\limits_{\substack{0\leq t\leq 1 \\ u\in S_r(c)\cap V_n}}F(g_k(t, u))\leq \gamma_n(c) + \frac{1}{k}.$$
Since $\widetilde{\gamma}_n(c)=\gamma_n(c)$, $\widetilde{g}_k=(g_k,0)\in \widetilde{\Gamma}_n$ satisfies
$$\max\limits_{\substack{0\leq t\leq 1 \\ u\in S_r(c)\cap V_n}}\widetilde{F}(\widetilde{g}_k(t, u))\leq \widetilde{\gamma}_n(c) + \frac{1}{k}.$$
Thus applying Lemma \ref{lm-ekeland1}, we obtain a sequence $\{(u_k,\theta_k)\}\subset S_r(c) \times \R$ such that:
\begin{itemize}
  \item [(i)] $\widetilde{F}(u_k, \theta_k) \in [\gamma_n(c)-\frac{1}{k}, \gamma_n(c)+\frac{1}{k}]$;
  \item [(ii)] $\min\limits_{\substack{0\leq t\leq 1 \\ u\in S_r(c)\cap V_n}} \| (u_k, \theta_k)-(g_k(t, u), 0) \|_E \leq\frac{1}{\sqrt{k}}$;
  \item [(iii)] $\| \widetilde{F}'|_{S_r(c) \times \R}(u_k, \theta_k)  \|_{E^{\ast}}\leq \frac{2}{\sqrt{k}}$,\ i.e.
$$|\langle \widetilde{F}'(u_k, \theta_k), z \rangle_{E^{\ast}\times E}  |\leq \frac{2}{\sqrt{k}}\left \| z \right \|_E,$$ holds for all $z \in \widetilde{T}_{(u_k, \theta_k)}:= \{(z_1,z_2) \in E, \langle u_k, z_1\rangle_{L^2}=0\}$.
\end{itemize}
For each $k\in \N$, let $v_k=\kappa(u_k, \theta_k)$. We shall prove that $v_k \in S_r(c)$ satisfies \eqref{PSC}. Indeed, first, from $(i)$ we have that $F(v_k) \underset{k}{\to} \gamma_n(c)$, since $F(v_k)=F(\kappa(u_k, \theta_k))=\widetilde{F}(u_k, \theta_k)$. Secondly, note that
$$Q(v_k) = A(v_k) + \frac{1}{4}B(v_k) - \frac{3(p-2)}{2p}C(v_k)= \langle \widetilde{F}'(u_k, \theta_k), (0,1) \rangle_{E^{\ast}\times E},$$
and $(0,1)\in \widetilde{T}_{(u_k, \theta_k)}$. Thus $(iii)$ yields $Q(v_k)\underset{k}{\to} 0$. Finally, to verify that $F'|_{S_r(c)}(v_k)\underset{k}{\to}0$, it suffices to prove for $k\in \N$ sufficiently large, that
\begin{eqnarray}\label{c4-F'u}
|\langle F'(v_k), w\rangle_{(H_r^{1})^{\ast}\times H_r^{1}}  |\leq \frac{4}{\sqrt{k}}\left \| w \right \|^2,\ \mbox{ for all }\ w \in T_{v_k},
\end{eqnarray}
where $T_{v_k}:=\{w \in H_r^{1}(\R^3),\  \langle v_k,w\rangle_{L^2}=0\}$. To this end, we note that, for $w\in T_{v_k}$, setting $\widetilde{w}=\kappa(w, -\theta_k)$, one has
\begin{eqnarray*}
&\ &\langle F'(v_k),w \rangle_{(H_r^{1})^{\ast}\times H_r^{1}} \\
&=& \int_{\R^3}\nabla v_k  \nabla w dx + \int_{\R^3}\int_{\R^3}\frac{|v_k(x)|^2 v_k(y) w(y)}{|x-y|} dxdy -\int_{\R^3}|v_k|^{p-2}v_k w dx\\
&=& e^{2\theta_k}\int_{\R^3}\nabla u_k  \nabla \widetilde{w} dx
+ e^{\theta_k} \int_{\R^3}\int_{\R^3}\frac{|u_k(x)|^2 u_k(y) \widetilde{w}(y)}{|x-y|} dxdy\\
&-& e^{\frac{3(p-2)}{2}\theta_k}\int_{\R^3}|u_k|^{p-2}u_k \widetilde{w} dx
\ = \ \langle \widetilde{F}'(u_k, \theta_k), (\widetilde{w}, 0)\rangle_{E^{\ast}\times E}.
\end{eqnarray*}
If $(\widetilde{w},0)\in \widetilde{T}_{(u_k,\theta_k)}$ and $\|(\widetilde{w},0)\|_E^2\leq 2\|w\|^2$ when $k\in \N$ is sufficiently large, then $(iii)$ implies \eqref{c4-F'u}. To verify these conditions, observes that
$(\widetilde{w},0)\in \widetilde{T}_{(u_k,\theta_k))} \Leftrightarrow w \in T_{v_k}$. Also from $(ii)$ it follows that
$$|\theta_k|=|\theta_k-0|\leq \min\limits_{\substack{0\leq t\leq 1 \\ u\in S_r(c)\cap V_n}}\|(v_k, \theta_k)-(g_k(t, u), 0)\|_E\leq \frac{1}{\sqrt{k}},$$
by which we deduce that
\begin{eqnarray*}
\|(\widetilde{w},0)\|_E^2=\|\widetilde{w}\|^2= \int_{\R^3}|w(x)|^2dx + e^{-2\theta_k}\int_{\R^3}|\nabla w(x)|^2dx \leq 2 \|w\|^2,
\end{eqnarray*}
holds for $k\in \N$ large enough. At this point, \eqref{c4-F'u} has been verified. To end the proof of the lemma it remains to show that $\{v_k\} \subset S_r(c)$ is bounded. But since $p\in (\frac{10}{3}, 6)$  this follows immediately from the following relationship between $F(u)$ and $Q(u)$,
\begin{eqnarray}\label{FandQ}
F(u)- \frac{2}{3(p-2)}Q(u) = \frac{3p-10}{6(p-2)}A(u) + \frac{3p-8}{12(p-2)}B(u).
\end{eqnarray}
\end{proof}

\begin{remark}\label{rek-Q}
Note that in \cite[Lemma 2.1]{JL2}, it is proved that any critical point $u_0\in S_r(c)$ of $F$ on $S_r(c)$ must satisfy $Q(u_0)=0$. So far this information has been used in Lemma \ref{lm-PSC} to construct a bounded Palais-Smale sequence. As we shall see in our next result it is also useful to insure that our Palais-Smale sequences do not vanish.
\end{remark}

\begin{proposition}\label{prop-compact}
Let $\{u_k\}\subset S_r(c)$ be the Palais-Smale sequence obtained in Lemma \ref{lm-PSC}. Then there exist $\lambda_n\in \R$ and $u_n\in H_r^1(\R^3)$, such that, up to a subsequence,
\begin{enumerate}
  \item [i)] $u_k \rightharpoonup u_n \neq 0$, in $H^1_r(\R^3)$,
  \item [ii)] $- \Delta u_k - \lambda_n u_k + (|x|^{-1}\ast |u_k|^2)u_k - |u_k|^{p-2}u_k \to 0$,  in $H^{-1}_r(\R^3)$,
  \item [iii)] $- \Delta u_n - \lambda_n u_n + (|x|^{-1}\ast |u_n|^2)u_n - |u_n|^{p-2}u_n = 0$,  in $H^{-1}_r(\R^3)$.
\end{enumerate}
Moreover, if $\lambda_n <0$, then we have
$$u_k \to u_n,\ \mbox{ in } H_r^1(\R^3),\  \mbox{ as } k\to \infty.$$
In particular, $||u_n||_2^2 = c$, $F(u_n)=\gamma_n(c)$ and $F'(u_n)-\lambda_n u_n = 0$ in $H^{-1}_r(\R^3)$.
\end{proposition}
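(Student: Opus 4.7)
The plan is to extract a weak limit together with a Lagrange multiplier, then prove non-vanishing of the limit via the information $Q(u_k)\to 0$, and finally upgrade weak to strong convergence under the sign assumption on $\lambda_n$.

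First I would use the boundedness of $\{u_k\}\subset S_r(c)$, proved at the end of Lemma \ref{lm-PSC}. Up to a subsequence, $u_k\rightharpoonup u_n$ in $H^1_r(\R^3)$, and by the compact embedding $H^1_r(\R^3)\hookrightarrow L^q(\R^3)$ for $q\in(2,6)$, $u_k\to u_n$ strongly in $L^q$ for every such $q$, in particular $C(u_k)\to C(u_n)$. The condition $F'|_{S_r(c)}(u_k)\to 0$ together with the Lagrange multiplier rule (test against $u_k/c$) yields a bounded sequence $\lambda_k=\tfrac{1}{c}\langle F'(u_k),u_k\rangle$ such that $F'(u_k)-\lambda_k u_k\to 0$ in $H^{-1}_r(\R^3)$; extracting once more, $\lambda_k\to \lambda_n$ for some $\lambda_n\in\R$. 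This yields \textit{(ii)}, and passing to the weak limit in the equation (using Hardy--Littlewood--Sobolev and the strong $L^{12/5}$-convergence of $u_k$ to control the Coulomb term) gives \textit{(iii)}.

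Next I would establish the key non-vanishing statement $u_n\neq 0$ in \textit{(i)}. Since $\gamma_n(c)\geq b_n\geq 1>0$ and $Q(u_k)\to 0$, using \eqref{FandQ} one has
\begin{equation*}
\frac{3p-10}{6(p-2)}A(u_k)+\frac{3p-8}{12(p-2)}B(u_k)=F(u_k)-\frac{2}{3(p-2)}Q(u_k)\longrightarrow \gamma_n(c)>0.
\end{equation*}
If $u_n=0$ then compactness gives $C(u_k)\to 0$, and the definition of $Q$ forces $A(u_k)+\tfrac{1}{4}B(u_k)\to 0$, contradicting the positive lower bound above. Hence $u_n\neq 0$.

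Finally, assuming $\lambda_n<0$, I would prove strong convergence by a splitting argument. Set $w_k:=u_k-u_n$; then the Brezis--Lieb lemma yields $A(u_k)=A(w_k)+A(u_n)+o(1)$ and $D(u_k)=D(w_k)+D(u_n)+o(1)$, while $C(w_k)\to 0$ by strong $L^p$-convergence, and the analogous decomposition $B(u_k)=B(w_k)+B(u_n)+o(1)$ holds for the Coulomb term (standard, via Hardy--Littlewood--Sobolev applied to $|u_k|^2-|u_n|^2$). Testing \textit{(ii)} against $u_k$, the limit equation \textit{(iii)} against $u_n$, and subtracting gives
\begin{equation*}
A(w_k)-\lambda_n D(w_k)+B(w_k)\longrightarrow 0.
\end{equation*}
Since $B(w_k)\geq 0$, $A(w_k)\geq 0$ and $-\lambda_n>0$, each nonnegative summand must vanish; hence $A(w_k)\to 0$ and $D(w_k)\to 0$, i.e.\ $u_k\to u_n$ in $H^1_r(\R^3)$. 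Consequently $\|u_n\|_2^2=c$, $F(u_n)=\gamma_n(c)$, and $F'(u_n)-\lambda_n u_n=0$ in $H^{-1}_r(\R^3)$.

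The main technical difficulty I expect is the careful splitting of the nonlocal quadratic form $B$ along the weak limit (the $B(w_k)$ Brezis--Lieb step) and, in parallel, ensuring that the boundedness and convergence of the Lagrange multipliers $\lambda_k$ are obtained from $F'|_{S_r(c)}(u_k)\to 0$ in a way compatible with the unscaled functional $F$; the rest of the argument is structurally close to \cite{BV,BJL,LJ}.
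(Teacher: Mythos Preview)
Your proof is correct and follows essentially the same route as the paper: boundedness from Lemma \ref{lm-PSC}, weak limit plus the compact radial embedding, non-vanishing via $Q(u_k)\to 0$ together with $\gamma_n(c)\geq b_n\geq 1$, extraction of the Lagrange multiplier (the paper simply cites \cite[Proposition 4.1]{BJL} for (ii)--(iii), while you spell it out), and strong convergence when $\lambda_n<0$ by testing (ii) against $u_k$ and (iii) against $u_n$.

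The only noteworthy difference is in the final step. You pass through a Brezis--Lieb decomposition $B(u_k)=B(w_k)+B(u_n)+o(1)$ and flag it as the main technical difficulty. The paper avoids this entirely: from $A(u_k)-\lambda_n D(u_k)+B(u_k)\to A(u_n)-\lambda_n D(u_n)+B(u_n)$ and the weak lower semicontinuity of each of $A$, $-\lambda_n D$ (here $\lambda_n<0$), and $B$, one concludes termwise convergence directly. In fact your anticipated difficulty is illusory in the radial setting anyway: since $u_k\to u_n$ strongly in $L^{12/5}(\R^3)$, Hardy--Littlewood--Sobolev already gives $B(u_k)\to B(u_n)$ and $B(w_k)\to 0$, so the splitting for $B$ is automatic rather than delicate.
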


\begin{proof}
Since $\{u_k\}\subset S_r(c)$ is bounded, up to a subsequence, there exists a $u_n\in H^1_r(\R^3)$, such that
$$u_k \underset{k}{\rightharpoonup} u_n,\ \mbox{ in } H^1_r(\R^3),$$
$$u_k \underset{k}{\to} u_n,\ \mbox{ in } L^p(\R^3).$$
We have $u_n \neq 0$. Indeed suppose by contradiction that $u_n =0$. Then by the strong convergence in $L^p(\R^3)$ it follows that $C(u_k) \to 0$. Taking into account that  $Q(u_k)\to 0$ it then implies that  $A(u_k)\to 0$ and $ B(u_k)\to 0$. Thus $F(u_k)\to 0$ and this contradicts the fact that $\gamma_n(c) \geq b_n \geq1$. Thus Point $i)$ holds.\medskip

The proofs of Points $ii)$ and $iii)$ can be found in \cite[Proposition 4.1]{BJL}. Now using Points $ii)$, $iii)$, and the convergence $C(u_k)\underset{k}{\to} C(u_n)$, it follows that
$$A(u_k)-\lambda_n D(u_k) + B(u_k) \underset{k}{\to} A(u_n)-\lambda_n D(u_n) + B(u_n).$$
If $\lambda_n<0$, then we conclude from the weak convergence of $u_k \underset{k}{\rightharpoonup} u_n$ in $H^1_r(\R^3)$, that
$$A(u_k)\underset{k}{\to}A(u_n),\,  B(u_k)\underset{k}{\to}B(u_n), \,  C(u_k)\underset{k}{\to}C(u_n).$$
Thus $u_k\underset{k}{\to}u_n$ in $H^1_r(\R^3)$, and in particular, $||u_n||_2^2 =c$, $F(u_n)=\gamma_n(c)$ and $F'(u_n)-\lambda_n u_n = 0$ in $H^{-1}_r(\R^3)$.
\end{proof}

At this point we can prove our main result.



\begin{proof}[Proof of Theorem \ref{th-multiplicity}]
By Lemma \ref{lm-PSC} and Proposition \ref{prop-compact}, to prove Theorem \ref{th-multiplicity}, it is enough to verify that if $(u_n, \lambda_n)\in S_r(c)\times \R$ solves
$$-\Delta u - \lambda u + (|x|^{-1}\ast |u|^2)u = |u|^{p-2}u,\ \mbox{ in } \R^3,$$
then necessarily $\lambda_n<0$ provided $c>0$ is sufficiently small. However, this point has been proved in \cite[Lemma 4.2]{BJL}. Thus the proof of the theorem is completed.
\end{proof}

\textbf{Acknowledgements.} The author would like to thank Professor Louis Jeanjean for helpful discussions  and useful comments on a preliminary version of this paper.


\begin{thebibliography}{99}
\bibitem{ADP2} {A. Azzollini, P. d'Avenia, A. Pomponio,} \emph{Multiple critical points for a class of nonlinear functionals}, Ann. Mat. Pura Appl. (4) 190 (2011), no. 3, 507-523.
\bibitem{AP0} {A. Azzollini, A. Pomponio,} \emph{Ground state solutions for the nonlinear Schr\"odinger-Maxwell equations}, J. Math. Anal. Appl. 345 (2008), no. 1, 90-108.
\bibitem{AP}{A. Azzollini, A. Pomponio, P. d'Avenia, } \emph{On the Schr\"odinger-Maxwell equations under the effect of a
general nonlinear term},  Ann. Inst. H. Poincar\'e Anal. Non Lin\'eaire 27 (2010), no. 2, 779-791.
\bibitem{AR} {A. Ambrosetti, D. Ruiz,} \emph{Multiple bound states for the Schr\"odinger-Poisson problem}, Commun. Contemp. Math. 10 (2008), no. 3, 391-404.
\bibitem{BA} {C. Bardos, F. Golse, A. D. Gottlieb, N. Mauser,} \emph{Mean field dynamics of fermions and the time-dependent Hartree-Fock equation}, J. Math. Pures Appl. (9) 82 (2003), no. 6, 665-683.
\bibitem{TB1} {T. Bartsch,} \emph{Infnitely many solutions of a symmetric Dirichlet Problem}, Nonlin. Anal. 20 (1993), 1205-1216.
\bibitem{BV} {T. Bartsch, S. De Valeriola,} \emph{Normalized solutions of nonlinear Schr\"odinger equations}, Arch. Math. 100 (2013), 75-83.
\bibitem{BS1} {J. Bellazzini, G. Siciliano,}  \emph{Stable standing waves for a class of nonlinear
    Schr\"odinger-Poisson equations}, Z. Angew. Math. Phys. 62 (2011), no. 2, 267-280.
\bibitem{BS} {J. Bellazzini, G. Siciliano,} \emph{Scaling properties of functionals and existence of constrained minimizers}, J. Funct. Anal. 261 (2011), no. 9, 2486-2507.
\bibitem{BJL} {J. Bellazzini, L. Jeanjean, T-J. Luo,} \emph{Existence and instability of standing waves with prescribed norm for a class of Schr\"odinger-Poisson equations}, Proc. London Math. Soc. (3) 107 (2013) 303-339.
\bibitem{CDSS} {I. Catto, J. Dolbeault, O. S\'{a}nchez, J. Soler,} \emph{Existence of steady states for the Maxwell-Schr\"odinger-Poisson system : exploring the applicability of the concentration-compactness principle}, Math. Models Methods Appl. Sci. 23 (2013) 1915-1938.
\bibitem{PCEF} {P. E. Conner and E. E. Floyd,}  \emph{Fixed point free involutions and equivariant maps. II.}, Trans. Amer. Math. Soc. 105, 222-228.
\bibitem{CKW} {C. Chen, Y. Kuo,  T. Wu,} \emph{Existence and multiplicity of positive solutions for the nonlinear Schr\"odinger-Poisson equations}, Proc. Roy. Soc. Edinburgh Sect. A 143 (2013), 745-764.
\bibitem{DM}{T. D'Aprile, D. Mugnai, } \emph{Solitary waves for nonlinear Klein-Gordon-Maxwell and
Schrodinger-Maxwell equations}, Proc. Roy. Soc. Edinburgh Sect. A 134 (2004), no. 5, 893-906.
\bibitem{HLW} {Y. Huang, Z. Liu, Y. Wu,} \emph{Existence of Prescribed-Norm Solutions for a Class of Schr\"odinger-Poisson Equation}, Abstract and Applied Analysis, to appear.
\bibitem{JL2} {L. Jeanjean, T-J. Luo,} \emph{Sharp non-existence results of prescribed $L^2$-norm solutions for some class of Schr\"odinger-Poisson and quasi-linear equations}, Z. Angew. Math. Phys. 64 (2013), 937-954.
\bibitem{LJ} {L. Jeanjean,} \emph{Existence of solutions with prescribed norm for semilinear elliptic equations}, Nonlinear Analysis :  T. M. A. 28 (1997), no. 10, 1633-1659.
\bibitem{HK} {H. Kikuchi, } \emph{Existence and orbital stability of the standing waves for nonlinear Schr\"odinger equations via the variational method}, Doctoral Thesis (2008).
\bibitem{L2}{P. L. Lions,} \emph{Solutions of Hartree-Fock Equations for Coulomb Systems}, Comm. Math. Phys. 109 (1987), no. 1, 33-97.
\bibitem{LS}{E. H. Lieb, B. Simon,} \emph{The Thomas - Fermi theory of atoms, molecules, and solids}, Advances in Math. 23 (1977), no. 1, 22-116.
\bibitem{MA} {N. J. Mauser,} \emph{The Schr\"odinger-Poisson-X$\alpha$ equation}, Appl. Math. Lett. 14 (2001), no. 6, 759-763.
\bibitem{R} {D. Ruiz,} \emph{The Schr\"odinger-Poisson equation under the effect of a nonlinear local term}, J. Funct. Anal.  237  (2006),  no. 2, 655-674.
\bibitem {R2} {D. Ruiz,} \emph{On the Schr\"odinger-Poisson-Slater System: Behavior of Minimizers, Radial and Nonradial Cases}, Arch. Rational Mech. Anal. 198 (2010), no. 1, 349-368.
\bibitem{GS1} {G. Siciliano,} \emph{Multiple positive solutions for a Schr\"odinger-Poisson-Slater system}, J. Math. Anal. Appl. 365 (2010), no. 1, 288-299.
\bibitem{MS} {M. Struwe,} \emph{Variational Methods.} Applications to Nonlinear Partial Differential Equations and Hamiltonian Systems. Third Edition, 34. Springer-Verlag, Berlin, 1996.
\bibitem{OSJS} {O. S\'anchez, J. Soler,} \emph{Long-time dynamics of the Schr\"odinger-Poisson-Slater system}, J. Stat. Phys. 114 (2004), 179-204.
\bibitem{WZ} {Z. Wang, H-S. Zhou, } \emph{Positive solution for a nonlinear stationary Schr\"odinger-Poisson system in $\R^3$}, Discrete Contin. Dyn. Syst. 18 (2007), no. 4, 809-816.
\end{thebibliography}
\end{document}